\newtheorem{theorem}{Theorem}[section]
\newtheorem{lemma}[theorem]{Lemma}
\newtheorem{proposition}[theorem]{Proposition}
\theoremstyle{definition}
\newtheorem{definition}[theorem]{Definition}
\theoremstyle{remark}
\newtheorem{remark}[theorem]{Remark}
\newcommand{\1}{\mathbf{1}}
\newcommand\remove[1]{}
\def\1{\mathbf{1}}
\def\f2{\mathbb{F}_2}
\def\cl{\hskip0.02cm{\rm cl}\hskip0.01cm}
\newcommand{\RNP}{{\rm RNP}}
\newcommand{\ESA}{{\rm ESA}}
\newcommand{\ep}{\varepsilon}
\newcommand{\sign}{{\rm sign}\hskip0.02cm}
\begin{document}

\title{Connections between metric characterizations of superreflexivity and the Radon-Nikod\'ym property for dual Banach spaces}

\author{Mikhail~I.~Ostrovskii\footnote{The author gratefully acknowledges the support by NSF
DMS-1201269. The author would like to thank William~B.~Johnson and
Beata~Randrianantoanina for useful
discussions related to the subject of this paper.}\\
\\
Department of Mathematics and Computer Science\\
St. John's University\\
8000 Utopia Parkway\\
Queens, NY 11439\\
USA\\
e-mail: {\tt ostrovsm@stjohns.edu} }

\date{\today}
\maketitle

\noindent{\sc Abstract:} Johnson and Schechtman (2009)
characterized superreflexivity in terms of finite diamond graphs.
The present author characterized the Radon-Nikod\'ym property
(RNP) for dual spaces in terms of the infinite diamond. This paper
is devoted to further study of relations between metric
characterizations of superreflexivity and the RNP for dual spaces.
The main result is that finite subsets of any set $M$ whose
embeddability characterizes the RNP for dual spaces, characterize
superreflexivity. It is also observed that the converse statement
does not hold, and that $M=\ell_2$ is a counterexample.\medskip

\noindent{\bf 2010 Mathematics Subject Classification:} 46B85
(primary), 46B07, 46B22 (secondary).

\begin{large}

\section{Introduction}

Results of \cite{JS09} and \cite{Ost14a} indicate the existence of
some parallels between metric characterizations of
superreflexivity and metric characterizations of dual spaces with
the Radon-Nikod\'ym property (RNP).

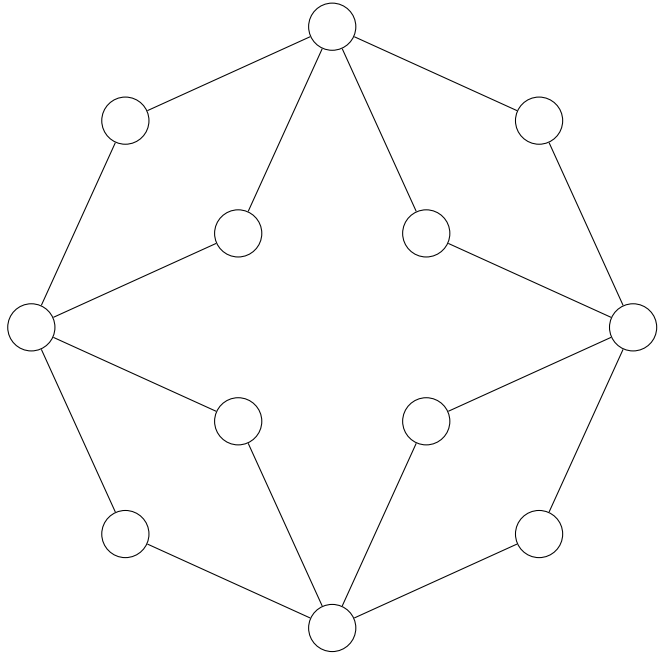
\begin{figure}
\begin{center}
\begin{tikzpicture}
  [scale=.25,auto=left,every node/.style={circle,draw}]
  \node (n1) at (16,0) {\hbox{~~~}};
  \node (n2) at (5,5)  {\hbox{~~~}};
  \node (n3) at (11,11)  {\hbox{~~~}};
  \node (n4) at (0,16) {\hbox{~~~}};
  \node (n5) at (5,27)  {\hbox{~~~}};
  \node (n6) at (11,21)  {\hbox{~~~}};
  \node (n7) at (16,32) {\hbox{~~~}};
  \node (n8) at (21,21)  {\hbox{~~~}};
  \node (n9) at (27,27)  {\hbox{~~~}};
  \node (n10) at (32,16) {\hbox{~~~}};
  \node (n11) at (21,11)  {\hbox{~~~}};
  \node (n12) at (27,5)  {\hbox{~~~}};

  \foreach \from/\to in {n1/n2,n1/n3,n2/n4,n3/n4,n4/n5,n4/n6,n6/n7,n5/n7,n7/n8,n7/n9,n8/n10,n9/n10,n10/n11,n10/n12,n11/n1,n12/n1}
    \draw (\from) -- (\to);

\end{tikzpicture}
 \caption{Diamond $D_2$.}\label{F:Diamond2}
\end{center}
\end{figure}

To state the corresponding results we recall the definition of the
infinite diamond. The {\it diamond graph} of level $0$ is denoted
$D_0$. It has two vertices joined by an edge of length $1$. $D_n$
is obtained from $D_{n-1}$ as follows. Each edge of $D_{n-1}$ is
of length $2^{-(n-1)}$. Given an edge $uv\in E(D_{n-1})$, it is
replaced by a quadrilateral $u, a, v, b$ with edge lengths
$2^{-n}$. We endow $D_n$ with their shortest path metrics. We
consider the vertex set of $D_n$ as a subset of the vertex set of
$D_{n+1}$, it is easy to check that this defines an isometric
embedding. We introduce $D_\omega$ as the union of the vertex sets
of $\{D_n\}_{n=0}^\infty$. For $u,v\in D_\omega$ we introduce
$d_{D_\omega}(u,v)$ as $d_{D_n}(u,v)$ where $n\in\mathbb{N}$ is
any integer for which $u,v\in V(D_n)$. Since the natural
embeddings $D_n\to D_{n+1}$ are isometric, $d_{D_n}(u,v)$ does not
depend on the choice of $n$ for which $u,v\in V(D_n)$. To the best
of my knowledge the first paper in which diamond graphs
$\{D_n\}_{n=0}^\infty$ were used in Metric Geometry is
\cite{GNRS04} (a conference version was published in 1999).

\begin{theorem}[\cite{JS09}]\label{T:JS}
A Banach space $X$ is nonsuperreflexive if and only if it admits
bilipschitz embeddings with uniformly bounded distortions of
diamonds $\{D_n\}_{n=1}^\infty$ of all sizes.
\end{theorem}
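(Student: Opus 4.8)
The proof splits into the two implications of the biconditional, and I would treat each with one of the classical pillars of the linear theory of superreflexivity: a theorem of James giving finite ``$\ell_1$-like'' sequences, which produces the embeddings, and Enflo's renorming theorem, which obstructs them.

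\emph{Nonsuperreflexivity implies uniformly bounded embeddings.} By James' characterization, if $X$ is not superreflexive then for every $\varepsilon\in(0,1)$ and every $m\in\mathbb{N}$ there are vectors $x_1,\dots,x_m\in B_X$ and functionals $g_1,\dots,g_m\in B_{X^*}$ with $g_i(x_j)=1$ for $i\le j$ and $g_i(x_j)=0$ for $i>j$ (I suppress the harmless additive error $\varepsilon$). Fixing $n$ and taking $m=2^n$, I would build $f\colon V(D_n)\to X$ recursively by level: set $f(0)=0$, $f(1)=2^{-n}\sum_{i=1}^{2^n}x_i$, and whenever an edge $e=uv$ of $D_{k-1}$ is replaced in $D_k$ by the quadrilateral $u,a,v,b$, perturb the midpoint by $f(a)=\tfrac{1}{2}(f(u)+f(v))+z_e$ and $f(b)=\tfrac{1}{2}(f(u)+f(v))-z_e$, where $z_e$ is $\tfrac{1}{2}$ times the alternating difference of the two halves of the dyadic block of $x_i$'s belonging to $e$ (so $\|z_e\|\le 2^{-k}$, with parallel edges assigned the same block but with its halves interchanged, so that $f(a)\ne f(b)$). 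The upper estimate $\|f(u)-f(v)\|\le d_{D_n}(u,v)$ is immediate from $\|x_i\|\le1$ and the block geometry. For the reverse estimate one tests a single well-chosen functional: for the newly created diagonal pair $a,b$, the functional $g_j$ with $j$ the first index of the second half of $e$'s block already gives $\|f(a)-f(b)\|=2\|z_e\|\ge 2^{-k}=\tfrac{1}{2}d_{D_n}(a,b)$, and an analogous choice handles an arbitrary pair of vertices. This yields a bilipschitz embedding of $D_n$ with distortion bounded by an absolute constant, independent of $n$.

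\emph{Uniformly bounded embeddings imply nonsuperreflexivity.} Here I would argue by contraposition. Assume $X$ is superreflexive; by Enflo's renorming theorem we may replace its norm by an equivalent uniformly convex one, with modulus of convexity $\delta_X$ satisfying $\delta_X(t)>0$ for all $t\in(0,2]$ — this changes each distortion by at most a fixed factor, so it suffices to bound distortions from below under the new norm. Suppose $f\colon D_n\to X$ satisfies $\tfrac{1}{C}d_{D_n}(x,y)\le\|f(x)-f(y)\|\le d_{D_n}(x,y)$. The engine is a ``fork'' estimate applied to each quadrilateral $u,a,v,b$ replacing an edge of $D_{k-1}$: if all four of $\|f(a)-f(u)\|,\|f(u)-f(b)\|,\|f(b)-f(v)\|,\|f(v)-f(a)\|$ are at most $\rho\cdot2^{-k}$, then, since $\|f(a)-f(b)\|\ge\tfrac{2}{C}2^{-k}$, uniform convexity applied to the pairs $\{f(a)-f(u),\,f(b)-f(u)\}$ and $\{f(a)-f(v),\,f(b)-f(v)\}$ forces both $f(u)$ and $f(v)$ to lie within $\rho\,(1-\delta_X(2/C))\,2^{-k}$ of $\tfrac{1}{2}(f(a)+f(b))$, whence $\|f(u)-f(v)\|\le\rho\,(1-\delta_X(2/C))\,d_{D_n}(u,v)$. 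Starting from $\rho=1$ at the bottom level $k=n$ and iterating upward through all $n$ levels — using that $\delta_X$ is nondecreasing, so the per-scale loss is not weakened at coarser scales, and that $C\rho\ge1$ throughout by the lower Lipschitz bound — one gets $\|f(0)-f(1)\|\le(1-\delta_X(2/C))^{\,n}$. Combined with $\|f(0)-f(1)\|\ge1/C$ this gives $C\,(1-\delta_X(2/C))^{\,n}\ge1$ for every $n$, which is impossible for a fixed $C$ since $\delta_X(2/C)>0$; indeed one reads off a quantitative lower bound on the distortions growing with $n$. Hence the distortions of the $D_n$ cannot be uniformly bounded.

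\emph{Main obstacle.} Once James' criterion and Enflo's renorming are in hand, both halves are conceptually clean; the step demanding the most care is the lower bilipschitz estimate in the first implication, namely verifying that the telescoping James construction separates \emph{all} pairs of vertices of $D_n$ — not merely the edges and the freshly created diagonals — by a constant independent of $n$. The fork iteration in the second implication is then essentially bookkeeping, the only delicate point being that the per-scale gain $\delta_X(2/C)$ survives the passage from finer to coarser scales, because the relevant separation ratios only increase.
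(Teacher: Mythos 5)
First, a remark on the comparison you asked for: the paper does not prove Theorem~\ref{T:JS} at all --- it is imported verbatim from \cite{JS09} --- so there is no internal proof to measure your argument against. Judged on its own terms, your overall architecture is the standard (and, as far as the known proof goes, the correct) one: James's finite biorthogonal arrays to build the embeddings, and a renorming-plus-uniform-convexity obstruction for the converse. Your second half is essentially complete. The fork estimate is right: for the quadrilateral $u,a,v,b$ one applies the modulus to the pairs $f(a)-f(u),\,f(b)-f(u)$ (norms at most $\rho 2^{-k}$, difference at least $\tfrac2C 2^{-k}$) and to the corresponding pair at $v$, and the bookkeeping $\tfrac1C\le\rho\le 1$ guarantees both that the modulus is applicable ($2/(C\rho)\le 2$) and that the per-scale gain is at least $\delta_X(2/C)$; iterating from the finest scale up to the single edge of $D_0$ gives $C(1-\delta_X(2/C))^n\ge 1$, which fails for large $n$. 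The only caveat is that $\rho$ must be read as the maximum of $\|f(x)-f(y)\|/2^{-k}$ over all scale-$k$ edges, so that the improvement propagates to every scale-$(k-1)$ edge simultaneously; you implicitly do this.

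The genuine gap is in the first half, and it sits exactly where you flag it: the lower Lipschitz bound for \emph{arbitrary} pairs of vertices is not a routine afterthought but is the actual content of the Johnson--Schechtman construction. Your recursion (midpoint perturbed by $\pm z_e$, with $z_e$ the signed difference of the two halves of the dyadic block of $e$, and the halves swapped on the parallel branch) is, under the natural reading, the canonical construction $f(v)=2^{-n}\sum_{i\in A_v}x_i$ with nested index sets along geodesics; but the verification that for every pair $p,q$ the difference sets $A_p\setminus A_q$ and $A_q\setminus A_p$ are interval-separated in a way that some single functional $g_j$ detects a definite fraction of $|A_p\setminus A_q|+|A_q\setminus A_p|=2^n d(p,q)$ requires a case analysis (comparable vertices; incomparable vertices whose geodesic passes through the join rather than the meet; unequal sizes of the two difference sets, which costs an extra constant beyond the $2/\theta$ you get for the fresh diagonals). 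This analysis succeeds precisely \emph{because} of the ``halves swapped'' assignment, and a careless block assignment does fail (e.g.\ assigning the same ordered block to both branches collapses $f(a)=f(b)$); so the claim that ``an analogous choice handles an arbitrary pair of vertices'' cannot be left as stated. Two smaller points: James's condition gives $g_i(x_j)=\theta<1$, not $1$, which only rescales constants; and under the literal reading of your recursion the upper bound is $\|f(u)-f(v)\|\le 2\,d_{D_n}(u,v)$ rather than $d_{D_n}(u,v)$, again harmless for bounded distortion but not ``immediate'' in the form you assert.
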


\begin{theorem}[\cite{Ost14a}]\label{T:RNPvsDmnd} A dual Banach space does not have the \RNP\ if and
only if it admits a bilipschitz embedding of $D_\omega$.
\end{theorem}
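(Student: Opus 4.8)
The plan is to run both implications through the classical characterization of the \RNP: a Banach space $X$ fails the \RNP\ if and only if it contains a bounded, convex, closed, \emph{non-dentable} set, i.e.\ a set admitting some $\ep>0$ with $\diam S>\ep$ for every slice $S$ (equivalently: a bounded $\ep$-bush; equivalently, a bounded, non-convergent $X$-valued martingale); when $X=Y^*$ this set may be taken $w^*$-closed and its slices $w^*$-open, which is exactly what makes the infinite object $D_\omega$ accessible. The geometric bridge, used in both directions, is the following feature of $D_\omega$: the two vertices $a,b$ inserted into an edge $uv$ at any level are metric midpoints of $u$ and $v$ while $\dist(a,b)=\dist(u,v)$, and this midpoint-splitting recurs self-similarly at every scale (between the two endpoints of any edge of any $D_n$ sits a rescaled copy of $D_\omega$). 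Equivalently, the $s$--$t$ geodesics of $D_\omega$ form a \emph{thick} family: given a geodesic $g$ and any finite set of points on it, one can reroute $g$ through the complementary midpoints in the gaps to obtain another $s$--$t$ geodesic agreeing with $g$ at those points but whose total deviation from $g$ is a fixed fraction of $\dist(s,t)$ --- the wiggle room being a fixed \emph{relative} quantity, immune to the $2^{-n}$ rescalings.

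For the ``if'' direction, let $f\colon D_\omega\to X$ be bilipschitz with $\dist(x,y)\le\|f(x)-f(y)\|\le L\,\dist(x,y)$ after a harmless rescaling, and put $K=\overline{\conv}^{\,w^*}\!\big(f(D_\omega)\big)$, which is bounded and $w^*$-closed since $\diam D_\omega\le1$. One then checks that $K$ is non-dentable with constant of order $1/L$: given a $w^*$-slice $S$ of $K$, a vertex $v_0$ whose image nearly maximizes the defining functional lies on some $s$--$t$ geodesic $g$, and rerouting $g$ in the gaps of a fine net of $g$ through $v_0$, choosing the detours so as not to lose the functional value, produces a point of $K$ that is still in $S$ but lies at distance $\gtrsim\beta/L$ from $f(v_0)$, where $\beta$ is the thickness constant; hence $\diam S\gtrsim 1/L$ and $X$ fails the \RNP. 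The technical heart here is precisely the claim that the rerouting can be carried out while keeping the functional large, and $w^*$-compactness of $K$ is used to realize the extremal configurations involved.

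For the ``only if'' direction, assume $X=Y^*$ fails the \RNP\ and fix a bounded, $w^*$-closed, convex, non-dentable $K$ with constant $\ep>0$. I would build the embedding recursively along the levels $V(D_0)\subseteq V(D_1)\subseteq\cdots$, starting from a pair $x_s,x_t\in K$ with $\|x_s-x_t\|$ of order $1$ and maintaining the invariant that each current edge $uv\in E(D_n)$ carries a pair $x_u,x_v\in K$ with $\|x_u-x_v\|$ comparable to $\dist(u,v)=2^{-n}$. To split $uv$ into the quadrilateral $u,a,v,b$ of $D_{n+1}$, apply non-dentability at a point of $K$ near $\tfrac12(x_u+x_v)$ to obtain a $w^*$-slice that is thin in the direction $x_u-x_v$ yet of diameter $>\ep$, and extract from it two points $x_a,x_b\in K$ lying quantitatively near the midpoint of $x_u,x_v$ --- so that $\|x_a-x_u\|,\|x_a-x_v\|,\|x_b-x_u\|,\|x_b-x_v\|$ are all comparable to $\tfrac12\|x_u-x_v\|$ while $\|x_a-x_b\|\gtrsim\ep\cdot2^{-n}$ --- together with a norm-one functional witnessing $\|x_a-x_b\|$. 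Performing this at every edge and every level defines a map $D_\omega=\bigcup_n V(D_n)\to K$ whose upper Lipschitz bound is immediate; the lower bound --- that distinct vertices stay quantitatively apart and, crucially, that the image ``geodesics'' do not fold back on themselves --- comes from the recorded functionals, which must be chosen coherently so that the errors accumulated over the infinitely many splittings remain summable once the scale factors $2^{-n}$ are accounted for.

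I expect this ``only if'' direction to be the main obstacle: fabricating a genuine bilipschitz copy of the infinite, self-similar $D_\omega$ out of the soft hypothesis of non-dentability forces a delicate, scale-by-scale management of the separating functionals so that the lower Lipschitz estimate survives the passage to the limit. This is exactly the step that has no counterpart for general (non-dual) Banach spaces, where $w^*$-compactness --- and hence the ability to pass to the requisite limiting configurations --- is unavailable, which is also the reason that only the \emph{finite} diamonds $\{D_n\}_{n=1}^\infty$ appear in the duality-free characterization of superreflexivity in Theorem~\ref{T:JS}.
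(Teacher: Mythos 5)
First, a point of order: this paper does not prove Theorem \ref{T:RNPvsDmnd}; the theorem is imported wholesale from \cite{Ost14a}, so there is no internal proof to compare you against. Judged on its own terms, your outline has a genuine gap in each direction. For the ``if'' direction, the set $K$, the $w^*$-closed convex hull of $f(D_\omega)$, is simply not non-dentable in general, so the strategy fails at its first step. Concretely, if $f\colon D_\omega\to X$ is bilipschitz with $d(w,w')\le\|f(w)-f(w')\|\le L\,d(w,w')$, then $w\mapsto (f(w),d(w,s))\in X\oplus_1\mathbb{R}$ (where $s$ is a root of $D_0$) is again bilipschitz, yet the functional $(0,-1)$ cuts slices of diameter $O(\alpha)$: any convex combination $\sum_i\lambda_i(f(w_i),d(w_i,s))$ lying in the slice with parameter $\alpha$ has $\sum_i\lambda_i d(w_i,s)<\alpha$ and hence lies within $(L+1)\alpha$ of $(f(s),0)$, so the image of the root is a \emph{denting} point. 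Your rerouting estimate betrays the same defect: it yields $\diam S\gtrsim\alpha/(L\|x^*\|)$ for the slice with parameter $\alpha$, a bound that degenerates as $\alpha\to 0$, which is not non-dentability. The argument that actually works is the one you mention only parenthetically: the normalized increments $2^n(f(v)-f(u))$ over the (oriented) edges $uv$ of $D_n$ form a uniformly bounded martingale, since each level-$(n-1)$ increment is the average of its four level-$n$ children; and because $\|f(a)-f(b)\|\ge d(a,b)=d(u,v)$, at least one of the two inserted vertices satisfies $\bigl\|f(a)-\tfrac12(f(u)+f(v))\bigr\|\ge\tfrac12 d(u,v)$, so the martingale differences have norm $\ge 1$ with conditional probability $\ge\tfrac12$ at every step. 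A uniformly bounded divergent martingale kills the \RNP, and this direction requires no duality whatsoever.

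For the ``only if'' direction the overall shape (recursive edge-splitting, $w^*$-compactness, a coherent family of separating functionals) is reasonable, but the step you lean on is not available as stated: non-dentability is a \emph{fixed-scale} property --- every slice has diameter $\ge\ep$ for one fixed $\ep$ --- whereas at level $n$ you need two points of $K$ that are within roughly $2^{-(n+1)}$ of \emph{both} $x_u$ and $x_v$ while being $\gtrsim\ep\,2^{-n}$ apart. A slice of diameter $>\ep$ hands you two separated points somewhere in $K$, with nothing forcing them near $\tfrac12(x_u+x_v)$ at scale $2^{-n}$. One can partially repair this using convexity (from $x\in\cl\conv\bigl(K\setminus B(x,\ep)\bigr)$ extract two $\ep/2$-separated far points and slide toward them along segments scaled by $2^{-n}/\diam K$), but even then the genuinely hard part --- the lower Lipschitz estimate between vertices sitting in \emph{different} sub-diamonds, which you yourself flag as ``the main obstacle'' --- is exactly where the proof lives: one needs exact rather than approximate midpoint relations (obtained as $w^*$-limit points of the approximate convex combinations, which is where duality enters) together with a biorthogonal-type system of functionals whose errors telescope across the scales $2^{-n}$. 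As written, your text is a plausible plan for both halves and a proof of neither.
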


\begin{remark} It is known \cite{Ost11} that for Banach spaces which are
not dual spaces, lack of the \RNP\ does not imply embeddability of
$D_\omega$. (See \cite{Ost14a} for more results of this type.)
\end{remark}

Theorem \ref{T:JS} and \ref{T:RNPvsDmnd} make it natural to try to
understand whether similar results hold for other than $D_\omega$
separable metric spaces and their finite subsets. In this note we
prove that in one of the directions this is true. Recall (see
\cite{Ste75} and references therein) that a dual of a separable
Banach space has the \RNP\ if and only if it is separable. We
prove:

\begin{theorem}\label{T:InfToFin} If a metric space $M$ admits a bilipschitz embedding into any nonseparable
dual of a separable Banach space, then all of its finite subsets
embed into an arbitrary non-superreflexive Banach space with
uniformly bounded distortions.
\end{theorem}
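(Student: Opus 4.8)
\medskip\noindent\textbf{Proof strategy.} The plan is to deduce Theorem~\ref{T:InfToFin} from the Johnson--Schechtman characterization (Theorem~\ref{T:JS}) by factoring every embedding through one \emph{fixed} auxiliary space, chosen before, and independently of, the non-superreflexive target. The core of the argument is to construct a separable Banach space $W$ with two properties: (i) $W^{*}$ is non-separable; and (ii) there is a constant $C_{0}$ such that every finite subset of $W^{*}$ admits a bilipschitz embedding of distortion at most $C_{0}$ into \emph{every} non-superreflexive Banach space. I would build $W^{*}$ as an amalgam, organised along an infinitely branching rooted tree, of the finite ``James configurations'' --- the finite sets of vectors witnessing James' quantitative characterization of non-superreflexivity, equivalently, after Theorem~\ref{T:JS}, scaled copies of the diamonds $D_{n}$ --- that occur, with uniformly controlled constants, in every non-superreflexive space. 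Property (ii) would then be forced: a finite subset of $W^{*}$ meets only finitely many branches to finite depth, hence lies inside a single such finite configuration, and by the ``only if'' direction of Theorem~\ref{T:JS} that configuration embeds into an arbitrary non-superreflexive space with uniformly bounded distortion.

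Granting (i)--(ii), the remainder is routine. By hypothesis $M$ admits a bilipschitz embedding $f\colon M\to W^{*}$; let $C_{1}=C_{1}(M)$ be its distortion, which depends only on $M$ because $W$ was fixed in advance. Let $X$ be an arbitrary non-superreflexive Banach space --- one may first pass to a separable non-superreflexive subspace, superreflexivity being separably determined, although this is not needed --- and let $F\subseteq M$ be finite. Then $f(F)$ is a finite subset of $W^{*}$, so by (ii) there is a bilipschitz embedding $g\colon f(F)\to X$ of distortion $\le C_{0}$, and the composition $g\circ f|_{F}$ embeds $F$ into $X$ with distortion $\le C_{0}C_{1}$, a bound independent of $F$ and of $X$. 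This is precisely the conclusion of the theorem.

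The main obstacle is entirely in the construction of $W$, namely the tension between requiring $W^{*}$ to be the dual of a \emph{separable} space and requiring property (ii). Keeping the predual separable pushes one toward a crude $\ell_{\infty}$-type amalgamation; but an $\ell_{\infty}$-sum of nontrivial finite-dimensional spaces already contains, with uniformly bounded distortion, bilipschitz copies of the Hamming cubes, and those do not embed with bounded distortion into a $B$-convex non-superreflexive space, so (ii) breaks. The fix I would pursue is to let the non-separable direction of $W^{*}$ develop in an $\ell_{2}$-along-the-tree rather than $\ell_{\infty}$-along-the-tree fashion --- i.e.\ via a James-tree-type norm --- and then to check that every finite-dimensional subspace of the resulting space is, up to uniformly bounded distortion, one of the finite James/diamond configurations supplied by Theorem~\ref{T:JS}. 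Showing that ``non-separable dual of a separable space'' can be arranged without smuggling in finite configurations that obstruct embeddability into some non-superreflexive space is the heart of the matter.
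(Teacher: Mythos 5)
There is a genuine gap, and it is located exactly where you place ``the heart of the matter'': the fixed auxiliary space $W$ with properties (i) and (ii) cannot exist, so the whole factorization strategy collapses. Indeed, suppose $W$ is separable, $W^{*}$ is nonseparable, and the finite subsets of $W^{*}$ embed into every non-superreflexive Banach space with uniformly bounded distortion (even allowing the bound to depend on the target, which is weaker than your $C_{0}$). There are non-superreflexive spaces of type $2$ (James \cite{Jam78}, Pisier--Xu \cite{PX87}) and of cotype $2$ (namely $\ell_{1}$); by Bourgain's discretization theorem \cite{Bou87,GNS12}, uniform bilipschitz embeddability of finite subsets upgrades to uniform linear embeddability of finite-dimensional subspaces, so $W^{*}$ would be crudely finitely representable in a type-$2$ space and in a cotype-$2$ space, hence would itself have type $2$ and cotype $2$, and by Kwapie\'n's theorem \cite{Kwa72} would be isomorphic to a Hilbert space. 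But then $W^{*}$ is reflexive, so $W$ is reflexive and $W=W^{**}=(W^{*})^{*}$ is nonseparable, contradicting the separability of $W$. (This is precisely the content of the Remark following Proposition \ref{P:FinToInf}: up to isomorphism only Hilbert space has the embedding property you demand in (ii), and a nonseparable Hilbertian space is never the dual of a separable space.) Your own worry about $\ell_{\infty}$-amalgams smuggling in Hamming cubes is a symptom of this obstruction, but no James-tree-type renorming can evade it.

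The repair --- and the actual route of the paper --- is to abandon the idea of a single auxiliary space and let it depend on the non-superreflexive target $X$. Theorem \ref{T:RNPandSuperRefl} produces, for each non-superreflexive $X$, a separable $Z$ with $Z^{*}$ nonseparable and $Z^{*}$ \emph{crudely finitely representable in} $X$ (via $C(0,1)$ when $X$ has trivial type, and via Brunel--Sucheston spreading models, ESA bases and transfinite duals otherwise). Given such a $Z^{*}$, the hypothesis on $M$ yields a bilipschitz embedding $f\colon M\to Z^{*}$ with some distortion $C_{1}$; a finite subset $F\subseteq M$ maps to a finite subset of $Z^{*}$ spanning a finite-dimensional subspace, which by crude finite representability is $C$-isomorphic to a subspace of $X$, so $F$ embeds into $X$ with distortion at most $C_{1}C$, uniformly in $F$. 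Note that this weaker conclusion (constants depending on $X$) is all that Theorem \ref{T:InfToFin} asserts; your proposal aims at a strictly stronger, target-independent constant, which is unattainable. Theorem \ref{T:JS} is not needed for this direction at all.
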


The implication in the other direction does not hold in general.
We mean the following result which is proved in Section
\ref{S:HilbEx}.

\begin{proposition}\label{P:FinToInf} There exist a separable metric space $M$ and a separable Banach space $X$ with nonseparable dual $X^*$,
such that finite subsets of $M$ admit embeddings into an arbitrary
non-superreflexive Banach space with uniformly bounded
distortions, but $M$ does not admit a bilipschitz embedding into
$X^*$.
\end{proposition}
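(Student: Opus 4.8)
The plan is to take $M=\ell_2$ and to prove the two halves of the statement separately: that all finite subsets of $\ell_2$ embed with uniformly bounded distortion into an arbitrary non-superreflexive space, and that there is a single separable space $X$ with $X^*$ nonseparable into which $\ell_2$ does not embed bilipschitzly. The first half is where $\ell_2$ is a convenient choice, and it follows at once from Dvoretzky's theorem: any finite subset $F\subseteq\ell_2$ lies in a finite-dimensional subspace, say $F\subseteq\ell_2^{k}$ with $k\le|F|$, and by Dvoretzky's theorem every infinite-dimensional --- hence every non-superreflexive --- Banach space $Y$ contains, for each $\varepsilon>0$, a subspace $(1+\varepsilon)$-isomorphic to $\ell_2^{k}$; composing the inclusion $F\hookrightarrow\ell_2^{k}$ with such an almost-isometric isomorphism embeds $F$ into $Y$ with distortion $\le 1+\varepsilon$, so all these embeddings have distortion bounded by a universal constant, independently of $F$ and of $Y$.

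For the second half one has to exhibit a suitable $X$. Two features are forced on $X^*$. It must contain no isomorphic copy of $c_0$: by Aharoni's theorem every separable metric space, $\ell_2$ included, embeds bilipschitzly into $c_0$, so $c_0\subseteq X^*$ would already give a bilipschitz copy of $\ell_2$ in $X^*$. It must also contain no isomorphic copy of $\ell_2$, since a linear isomorphic embedding is in particular bilipschitz. Moreover $X$ must be non-reflexive, because a separable space whose dual is reflexive --- hence separable --- cannot have $X^*$ nonseparable. These requirements point to a James-tree-type construction: one takes $X$ to be a tree space over the dyadic tree whose branches are normed by a space sufficiently far from $\ell_2$, tuned so that $X$ is non-reflexive, $X^*$ is nonseparable, and $X^*$ contains neither $c_0$ nor $\ell_2$.

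The crux --- and the step I expect to be the main obstacle --- is to upgrade the linear non-embedding $\ell_2\not\hookrightarrow X^*$ to the bilipschitz statement. Since $X$ is separable and $X^*$ is nonseparable, $X^*$ fails the \RNP\ (see \cite{Ste75}), so one cannot differentiate a Lipschitz map into $X^*$ and read off a bounded linear embedding; and the usual bilipschitz invariants --- nonlinear type, nonlinear cotype, Markov convexity --- do not separate $\ell_2$ from the relevant targets, because $\ell_2$ already has optimal type and cotype. The non-embedding has therefore to be detected through the asymptotic structure of the dual: one shows that $X^*$ satisfies a concentration/stability property, in the spirit of Kalton's work (tested on the interlaced graphs, or an asymptotic uniform smoothness inequality), which transfers under bilipschitz embeddings and which holds for $X^*$ but fails for $\ell_2$, so that a bilipschitz copy of $\ell_2$ in $X^*$ is impossible. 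Verifying such a property for the James-tree-type space chosen above, together with the checks that $X^*$ is genuinely $c_0$-free and $\ell_2$-free, is where essentially all the difficulty lies; the finite-subset half and the reductions above are routine.
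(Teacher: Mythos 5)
Your first half (Dvoretzky) and your general strategy (take $M=\ell_2$ and a James-tree-type space $X$) match the paper, but there is a genuine gap at exactly the step you flag as the crux, and it stems from a misdiagnosis. You argue that since $X^*$ fails the \RNP\ ``one cannot differentiate a Lipschitz map into $X^*$ and read off a bounded linear embedding,'' and you therefore reach for an unspecified concentration/asymptotic-structure invariant that would have to hold for $X^*$ and fail for $\ell_2$; you never produce such an invariant, so the bilipschitz non-embedding is not proved. But the differentiation route is in fact available here: the relevant tool is weak$^*$ G\^ateaux differentiability of Lipschitz maps into duals of \emph{separable} spaces (Heinrich--Mankiewicz; see \cite[Corollary 7.10]{BL00}), which requires only that the target be a dual of a separable space --- precisely the situation at hand --- and not that it have the \RNP. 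This is exactly what the paper uses: a bilipschitz embedding of $\ell_2$ into $(JT_p)^*$ would yield a linear isomorphic copy of $\ell_2$ inside $(JT_p)^*$, reducing the problem to the linear statement.

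The remaining linear statement is also not ``tuning'' that can be waved at: one must actually verify that $(JT_p)^*$ (with $p\in(2,\infty)$) contains no isomorphic copy of $\ell_2$. The paper does this by exploiting the exact sequence structure from \cite{LS75}: the quotient $(JT_p)^*/B$ is $\ell_q(\Gamma)$ with $q\in(1,2)$, which is totally incomparable with $\ell_2$, so a copy of $\ell_2$ would have to live essentially in the predual part $B$; a copy of $\ell_2$ in $B$ is then excluded by a weakly null sequence argument using a generalization of \cite{AI81}, producing an $\ell_q$-lower estimate incompatible with the $\ell_2$-basis. (Your observation that $X^*$ must also avoid $c_0$ because of Aharoni's theorem is a nice sanity check, though it is subsumed by the differentiation argument, since a bilipschitz copy of $c_0$ in a separable dual would already be ruled out the same way.) As written, your proposal establishes only the easy half of the proposition.
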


The Hilbert space $\ell_2$ is an example of an $M$ satisfying the
conditions of Proposition \ref{P:FinToInf}.

\begin{remark} It is worth mentioning that the Hilbert space is, up to an isomorphism, the only Banach space
finite subsets of which admit embeddings into an arbitrary
non-superreflexive Banach space with uniformly bounded
distortions. In fact, by results of James \cite{Jam78} and
Pisier-Xu \cite{PX87} there exist nonsuperreflexive spaces of type
2. It is well known that there exist nonsuperreflexive spaces of
cotype 2 (for example, $\ell_1$). On the other hand, Bourgain's
discretization theorem \cite{Bou87,GNS12} implies that uniform
bilipschitz embeddability of finite subsets implies existence of
uniformly isomorphic embeddings of finite-dimensional subspaces.
Therefore each Banach space satisfying the conditions of
Proposition \ref{P:FinToInf} has type 2 and has cotype 2, hence,
by the Kwapie\'n theorem \cite{Kwa72}, it is isomorphic to a
Hilbert space.
\end{remark}

Theorem \ref{T:InfToFin} is an immediate consequence of the
following result which is proved in the next section.

\begin{definition}\label{D:FinRep} Let $X$ and $Y$ be two Banach spaces. The space $X$
is said to be {\it finitely representable}  in $Y$ if for any
$\ep>0$ and any finite-dimensional subspace $F\subset X$ there
exists a finite-dimensional subspace $G\subset Y$ such that
$d(F,G)<1+\ep$, where $d(F,G)$ is the Banach-Mazur distance.

The space $X$ is said to be {\it crudely finitely representable}
in $Y$ if there exists $1\le C<\infty$ such that for any
finite-dimensional subspace $F\subset X$ there exists a
finite-dimensional subspace $G\subset Y$ such that $d(F,G)\le C$.
\end{definition}

\begin{theorem}\label{T:RNPandSuperRefl} For each non-superreflexive
Banach space $X$ there exists a non-separable dual $Z^*$ of a
separable Banach space $Z$, such that $Z^*$ is crudely finitely
representable in $X$.
\end{theorem}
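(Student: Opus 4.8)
The plan is to build the dual space $Z^*$ from the finite-dimensional structure that a non-superreflexive space $X$ is forced to contain. By the James characterization of superreflexivity, since $X$ is not superreflexive, for every $\theta \in (0,1)$ and every $n$, $X$ contains vectors $x_1,\dots,x_n$ and functionals exhibiting the James $\ell_\infty^n$–$\ell_1^n$ "biorthogonality with gaps" condition; equivalently, for each $n$ the space $X$ contains, with distortion close to $1$, a copy of a certain $2n$-dimensional space built from the $\ell_1$/$\ell_\infty$ sum structure. The natural candidate for $Z$ is a Banach space built so that $Z^*$ is exactly (isometrically universal for, and finitely generated by) these finite pieces. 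Concretely, I would take the James-type space or the "James tree"-flavored construction whose dual is non-separable precisely because $X$ is non-superreflexive; a clean choice is to let $Z$ be a separable space such that $Z^*$ is crudely finitely representable in the $\ell_p$- or $c_0$-sum of the finite blocks James' theorem provides.

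**Key steps, in order.**

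\emph{Step 1.} Invoke James' theorem: $X$ non-superreflexive means for some fixed $\theta>0$, for all $n$ there are $(x_i)_{i=1}^n$ in $B_X$ and $(f_i)_{i=1}^n$ in $B_{X^*}$ with $f_i(x_j)=\theta$ for $i\le j$ and $f_i(x_j)=0$ for $i>j$. \emph{Step 2.} Use these to show that a single separable space $W$ — assembled as a suitable direct sum (an $\ell_2$- or $c_0$-sum) of the finite-dimensional spans $E_n=\mathrm{span}(x_1,\dots,x_n)\subset X$ — has the property that $X$ crudely finitely represents every finite-dimensional subspace of $W$; here one must pass to an actual separable $W$ and be careful that finite-dimensional subspaces of the sum embed into $X$ with a \emph{uniform} constant, which follows from the standard gliding/stability argument for such sums. \emph{Step 3.} Exhibit $W$ as a dual space $Z^*$ of a separable $Z$: either $W$ is already reflexive-free-of-the-issue and one checks directly that it has an isometric (or isomorphic) predual, or — the more robust route — one chooses the construction in Step 2 to be of James-quasi-reflexive type so that $W = Z^{**}$ with $Z^{**}/Z$ one-dimensional, hence $W$ is (isomorphic to) the dual of $Z^*$... more cleanly: arrange $W=Z^*$ by taking $Z$ to be the canonical predual of the chosen sum. \emph{Step 4.} Show $Z^*$ is non-separable: this is where non-superreflexivity is used a second time — the James vectors give an uncountable $\theta$-separated set in $Z^*$ (indexed by branches of a dyadic tree of the finite configurations), exactly as in the classical James-tree space $JT$, so $Z^*$ fails to be separable. \emph{Step 5.} Conclude crude finite representability of $Z^*$ in $X$ from Step 2 together with the fact that $Z^* \subset W$ isometrically (or with bounded distortion), so every finite-dimensional subspace of $Z^*$ sits, up to constant $C$, inside $X$.

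**Main obstacle.**

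The hard part is Step 3–Step 4 \emph{simultaneously}: one needs a single separable $Z$ whose dual is at once (a) non-separable and (b) crudely finitely representable in $X$, and these pull in opposite directions — non-separability of a dual wants a "tree" or "$\ell_1$-like" spreading of the finite pieces, while crude finite representability in $X$ only gives us the finite James configurations one $n$ at a time with no coherent infinite object inside $X$. The resolution is that crude finite representability is a purely \emph{local} (finite-dimensional) requirement, so it suffices that $Z^*$'s finite-dimensional subspaces are each (uniformly) modeled on James $\ell_\infty$–$\ell_1$ blocks, which $X$ supplies by James' theorem; the global non-separability of $Z^*$ is then arranged intrinsically in the construction of $Z$ (a James-tree-type or "infinite diamond"-compatible space whose finite sublevels are exactly the James blocks) and does \emph{not} need to be witnessed inside $X$. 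I would therefore spend most of the proof verifying that the James-tree-style predual $Z$ can be chosen with all finite-dimensional subspaces of $Z^*$ uniformly $\ell_\infty$/$\ell_1$-decomposable, and then quote James' theorem to land those blocks in $X$ with a uniform constant $C=C(\theta)$.
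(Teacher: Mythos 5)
Your overall strategy --- extract James' finite biorthogonal configurations from the non-superreflexive $X$ and assemble them into a separable space with nonseparable dual --- identifies the right tension, but the assembly is where the proof actually lives, and the construction you sketch does not work. The concrete failure is in Step 2: a $c_0$- or $\ell_2$-sum of the finite-dimensional subspaces $E_n\subset X$ is in general \emph{not} crudely finitely representable in $X$. A finite-dimensional subspace of such a sum that spreads across many blocks carries the local structure of the outer sum, and no gliding-hump argument places it back inside $X$ with a uniform constant: if $X$ has nontrivial cotype, the $c_0$-sum contains $c_0$ isometrically and hence $\ell_\infty^n$ uniformly, which cannot land in $X$; and the $\ell_2$-sum is reflexive, hence a separable dual, so it can never serve as the required nonseparable $Z^*$. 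The same objection undermines your resolution of Steps 3--5: demanding that all finite-dimensional subspaces of $Z^*$ be ``uniformly $\ell_\infty$/$\ell_1$-decomposable'' is incompatible with landing them in an arbitrary non-superreflexive $X$, because such an $X$ may have nontrivial type (James, Pisier--Xu), in which case $\ell_1^n$ does not embed uniformly. James' theorem gives you the system $f_i(x_j)=\theta$ for $i\le j$ and $0$ for $i>j$, but says nothing about the isomorphic type of the spans $E_n$, so you cannot prescribe the local structure of $Z^*$ in advance and must instead inherit it from $X$.

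The paper resolves exactly this difficulty with two devices absent from your outline. First, a case split: if $X$ has trivial type, then $\ell_1$ is finitely representable in $X$ (Pisier) and one simply takes $Z=C(0,1)$, since $(C(0,1))^*$ is nonseparable and finitely representable in $\ell_1$. If $X$ has nontrivial type, the incoherent finite configurations are regularized via a spreading model into a single space $E$ with an equal-signs-additive basis that is finitely representable in $X$ (Brunel--Sucheston); nontrivial type forces this basis to be boundedly complete, so $E\cong R^*$ is itself a dual. The nonseparable dual locally modeled on $X$ is then manufactured not by a tree or direct sum but by iterating duality transfinitely: $Z=R^{(\omega^2)}$ is separable (Perrott, Brunel--Sucheston), while $Z^*=R^{(\omega^2+1)}$ is nonseparable (Bellenot; Davis--Lindenstrauss) and is finitely representable in $R^*=E$ by a transfinite application of the principle of local reflexivity --- this is the mechanism that lets crude finite representability in $X$ survive the passage to a nonseparable dual. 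Without the spreading-model regularization and the transfinite-dual step (or genuine substitutes for them), your Steps 2--4 cannot be completed as stated.
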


We refer to \cite{BL00,LT73,LT77,Ost13,Pis14} for background
material and presentations of some of the results used below.

\section{Proof of Theorem \ref{T:RNPandSuperRefl}}

First we consider the case where $X$ has no nontrivial type. In
such a case $\ell_1$ is finitely representable in $X$ (by the
result of \cite{Pis73}), and therefore $Z=C(0,1)$ satisfies the
conditions of Theorem \ref{T:RNPandSuperRefl}. In fact, it is
clear that $(C(0,1))^*$ is nonseparable. It is also known (see
e.g. \cite[Section 5.b]{LT73}) that $(C(0,1))^*$ is finitely
representable in $\ell_1$.
\medskip

Now we consider the case where $X$ has nontrivial type. Replacing,
if necessary, $X$ by a nonreflexive space finitely represented in
it, we may assume that $X$ is nonreflexive. The following notion,
introduced by Brunel and Sucheston turned out to be a very useful
in the study of nonreflexive spaces with nontrivial type.

\begin{definition}[{\cite[p.~84]{BS75}}] A sequence $\{e_n\}$ in a semi-normed
space is called {\it equal signs additive} (ESA) if for any
finitely non-zero sequence $\{a_i\}$ of real numbers such that
$\sign a_k=\sign a_{k+1}$, the equality
\begin{equation}\label{E:ESA}
\left\|\sum_{i=1}^{k-1}a_ie_i+(a_k+a_{k+1})e_k+\sum_{i=k+2}^\infty
a_ie_i\right\|=\left\|\sum_{i=1}^{\infty}a_ie_i\right\|
\end{equation}
holds.
\end{definition}

\begin{theorem}[\cite{BS75}] For each nonreflexive space $X$ there is
a Banach space $E$ with an \ESA\ basis which is finitely
representable in $X$.
\end{theorem}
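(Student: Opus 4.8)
The plan is to derive this from James's characterization of reflexivity together with the Brunel--Sucheston spreading model construction. Since $X$ is nonreflexive, James's theorem gives a constant $\theta\in(0,1)$, a sequence $(x_n)_{n=1}^\infty$ in the unit ball of $X$, and a sequence $(x_n^*)_{n=1}^\infty$ in the unit ball of $X^*$ with $x_n^*(x_m)=\theta$ for $n\le m$ and $x_n^*(x_m)=0$ for $n>m$. For $m<n$ this yields $\|x_n-x_m\|\ge x_n^*(x_n)-x_n^*(x_m)=\theta$, so $(x_n)$ is $\theta$-separated and has no norm-convergent subsequence; moreover the functionals $g_n:=x_n^*-x_{n+1}^*$ satisfy $g_n(x_m)=\theta$ if $m=n$ and $g_n(x_m)=0$ otherwise, a biorthogonality relation we record for later use.

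Next I would form the Brunel--Sucheston spreading model. By Ramsey's theorem one thins $(x_n)$ to a subsequence, still denoted $(x_n)$, so that the limit of $\bigl\|\sum_{i=1}^m a_ix_{k_i}\bigr\|$ as $k_1<\dots<k_m$ with $k_1\to\infty$ exists for every $m$ and every $(a_1,\dots,a_m)\in\mathbb{R}^m$; denote this limit $\bigl\|\sum_{i=1}^m a_ie_i\bigr\|_E$. This is a norm on the finitely supported scalar sequences (it is nondegenerate since $\|e_1-e_2\|_E\ge\theta$, and $(e_n)$ is a basic sequence --- a standard feature of spreading models, here also visible from the bounded biorthogonal functionals induced in the limit by the $g_n$), and its completion $E$ has $(e_n)$ as a normalized \emph{spreading} basis: $\bigl\|\sum_i a_ie_i\bigr\|_E=\bigl\|\sum_i a_ie_{q_i}\bigr\|_E$ for every strictly increasing sequence $(q_i)$. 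Finite representability of $E$ in $X$ is built into the construction: given $\ep>0$ and $m$, once $k_1<\dots<k_m$ are chosen large enough the map $e_i\mapsto x_{k_i}$ is a $(1+\ep)$-isomorphism of $[e_1,\dots,e_m]$ onto a subspace of $X$, and every finite-dimensional subspace of $E$ lies within $\ep$ of some $[e_1,\dots,e_m]$.

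The heart of the matter is to show that (possibly after replacing $(e_n)$ by a closely related spreading sequence) this spreading basis is \ESA. Using spreading-invariance to absorb the gap left at index $k+1$, the defining identity of an \ESA\ basis is equivalent to the following ``merge-and-shift'' identity: whenever $\sign a_k=\sign a_{k+1}$,
\[
\Bigl\|\sum_{i<k}a_ie_i+(a_k+a_{k+1})e_k+\sum_{i\ge k+1}a_{i+1}e_i\Bigr\|_E=\Bigl\|\sum_i a_ie_i\Bigr\|_E .
\]
Translated back into $X$, the two vectors whose limiting norms must agree differ by $\sum_{j\ge k+1}a_j\bigl(x_{k_j}-x_{k_{j-1}}\bigr)$, and the James relations pin these difference vectors down: $x_{k_j}-x_{k_{j-1}}$ is annihilated by every $x_n^*$ with $n\le k_{j-1}$ or $n>k_j$. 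The sign hypothesis forces the tail sums $\sum_{j\ge r}a_j$ straddling the merged block to vary monotonically, and --- exactly as one checks that the summing basis of the James space is \ESA\ --- the middle term of such a monotone triple of tail sums can be removed without changing the spreading-model norm, which is precisely the merge-and-shift identity. I expect this verification to be the main obstacle: the work lies in isolating the exact function of the coefficients on which $\|\cdot\|_E$ depends and in exploiting the monotonicity, rather than in bringing in any further structural input. Once it is in place, the space $E$ with its \ESA\ basis is finitely representable in $X$ by the observation already made, which completes the proof.
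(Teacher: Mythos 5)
Your first two steps (the Pt\'ak/James biorthogonal-type sequence and the Ramsey extraction of the spreading model with its invariant-under-spreading basis, which is finitely representable in $X$) coincide with the paper's proof. The gap is in the third step: the spreading model basis is invariant under spreading but in general it is \emph{not} \ESA, and your proposed direct verification cannot work because the spreading-model norm is not a function of the tail sums $\sum_{j\ge r}a_j$ alone --- the James relations only control the values of the functionals $x_n^*$ on the vectors, not the norm. Concretely, take $X=c_0\oplus_\infty\ell_2$, $x_n=s_n\oplus e_n$ (with $s_n$ the $n$th summing-basis vector of $c_0$) and $f_n=u_n^*\oplus 0$; this is a legitimate Pt\'ak sequence, and the spreading model norm is
\[
\Bigl\|\sum_i a_ie_i\Bigr\|_E=\max\Bigl(\max_r\Bigl|\sum_{i\ge r}a_i\Bigr|,\ \bigl(\sum_i a_i^2\bigr)^{1/2}\Bigr).
\]
For $(a_1,a_2,a_3)=(1,1,-2)$, merging the first two (equal-sign) coefficients gives norm $2\sqrt2$ while the unmerged vector has norm $\sqrt6$, so \eqref{E:ESA} fails. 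The $\ell_2$ component is invisible to the tail sums, so no ``monotone triple of tail sums'' argument can remove it.

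What is missing is precisely the second limiting procedure of Brunel--Sucheston (\cite[p.~84]{BS75}), which the paper invokes after forming the spreading model: starting from the IS basis one passes to normalized long blocks (essentially $k^{-1}(e_{(i-1)k+1}+\cdots+e_{ik})$, $k\to\infty$) and takes a further limit of norms. In the example above this kills the $\ell_2$ part and leaves the summing-basis norm, which is \ESA; in general it is this averaging, not the IS property by itself, that produces equal-signs additivity. This step carries its own obligations that your sketch does not address: one must check that the limiting seminorm is non-degenerate (this is where the functionals $f_n$, which survive the block averaging with value $\theta$, are really used) and that the resulting \ESA\ sequence is actually a basis --- a point subtle enough that the paper notes it was only settled in \cite[Proposition 1]{BS76}. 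So your hedge ``possibly after replacing $(e_n)$ by a closely related spreading sequence'' is exactly where the substance of the theorem lies, and as written the proposal does not supply it.
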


Since this theorem is not explicitly stated in \cite{BS75}, we
describe how to get it from the argument presented there. By
\cite{Pta59}, there is a sequence $\{x_i\}_{i=1}^\infty$ in $B_X$
(the unit ball of $X$) satisfying, for some $0<\theta<1$ and some
$\{f_i\}_{i=1}^\infty\subset B_{X^*}$ the condition
\[f_n(x_k)=\begin{cases} \theta &\hbox{ if }n\le k\\
0 &\hbox{ if }n>k.
\end{cases}\]
Following \cite[Proposition 1]{BS74} we build on the sequence
$\{x_i\}$ the spreading model $\widetilde X$ (the term {\it
spreading model} was not used in \cite{BS74}, it was introduced
later, see \cite[p.~359]{Bea79}). The natural basis
$\{e_i\}_{i=1}^\infty$ in $\widetilde X$ is {\it invariant under
spreading} (IS) in the sense that
\[\left\|\sum_i \alpha_ie_{k_i}\right\|=\left\|\sum_i
\alpha_ie_{i}\right\|\] for each strictly increasing sequence
$\{k_i\}$ of positive integers. The space $\widetilde X$ is
finitely representable in $X$, see \cite[p.~83]{BS75}. Now we use
the procedure described in \cite[p.~84]{BS75}, and get a Banach
space $E$ which is finitely representable in $\widetilde X$ and
has an \ESA\ basis. (Actually, the fact that we get a basis was
not verified in \cite{BS75}, this was done in \cite[Proposition
1]{BS76}).
\medskip

Since the space $E$ has nontrivial type, it follows from results
of \cite[Lemma 3, p.~290]{BS76} that this basis is boundedly
complete and hence $E$ is isomorphic to a dual space (see
\cite[Proposition 1.b.4]{LT77}).

\begin{remark} It would be interesting to show that $E$ is
isometric to a dual space. Then we would be able to omit the word
`crudely' from the statement of Theorem \ref{T:RNPandSuperRefl}.
\end{remark}

Let $R$ be a Banach space such that $R^*$ is isomorphic to $E$. We
construct the desired space $Z$ as a transfinite dual of $R$.
Transfinite duals were introduced in \cite{DJL76}. Let us recall
the definition. We denote the $n$th dual $(n\in \mathbb{N})$ of a
Banach $R$ by $R^{(n)}$. We say that an ordinal $\alpha$ is {\it
even} if it is either a limit ordinal or an ordinal of the form
$\beta+2n$ where $\beta$ is a limit ordinal and $n\in \mathbb{N}$.
We denote $R^{(\alpha)}$ by transfinite induction:

\begin{itemize}

\item $R^{(\alpha+1)}=(R^{(\alpha)})^*$.

\item If $\alpha$ is a limit ordinal, we let $R^{(\alpha)}$ to be
the completion of the union
\[\bigcup_{\stackrel{\beta<\alpha}{\beta{\rm~ is~ even}}}R^{(\beta)}.\]
(Observe that the union is well defined as a normed linear space
since $R^{(\beta)}$ admits a canonical isometric embedding into
$R^{(\gamma)}$ if $\beta<\gamma$ and both $\beta$ and $\gamma$ are
even.)
\end{itemize}
To complete the proof of Theorem \ref{T:RNPandSuperRefl} we prove
the following two statements:

\begin{enumerate}

\item\label{I:CFR} The space $R^{(\omega^2+1)}$ is crudely
finitely representable in $E$ (and thus in $X$).

\item The space $R^{(\omega^2)}$ is separable and the space
$R^{(\omega^2+1)}$ is nonseparable.

\end{enumerate}

Statement \ref{I:CFR} is an immediate consequence of the following
lemma:

\begin{lemma} Let $R$ be a Banach space and $R^*(=R^{(1)})$ be its
dual. Then $R^{(\gamma)}$ is finitely representable in $R^*$ for
every odd ordinal $\gamma$.
\end{lemma}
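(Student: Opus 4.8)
The plan is to argue by transfinite induction on the odd ordinal $\gamma$, using two standard ingredients: the principle of local reflexivity (for every Banach space $Y$ the bidual $Y^{**}$ is finitely representable in $Y$), and the transitivity of finite representability (if $X$ is finitely representable in $Y$ and $Y$ in $Z$, then $X$ is finitely representable in $Z$, obtained by multiplying the Banach--Mazur estimates and choosing the intermediate tolerances small). The base case $\gamma=1$ is trivial since $R^{(1)}=R^*$.

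For the inductive step let $\gamma\ge 3$ be odd and write $\gamma=\delta+1$ with $\delta$ even, $\delta\ge 2$. If $\delta$ is a successor ordinal, say $\delta=\sigma+1$, then $\sigma$ is odd with $1\le\sigma<\gamma$, and $R^{(\gamma)}=(R^{(\sigma)})^{**}$. By local reflexivity $R^{(\gamma)}$ is finitely representable in $R^{(\sigma)}$, and by the induction hypothesis $R^{(\sigma)}$ is finitely representable in $R^*$; transitivity finishes this case.

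The remaining case is $\delta=\beta$ a nonzero limit ordinal, so $R^{(\gamma)}=(R^{(\beta)})^*$, where $R^{(\beta)}$ is the completion of the increasing union $\bigcup_{\mu}R^{(\mu)}$ taken over even $\mu<\beta$, each $R^{(\mu)}$ being canonically and isometrically embedded in $R^{(\beta)}$. Fix a finite-dimensional $F\subset(R^{(\beta)})^*$ and $\ep>0$. For even $\mu<\beta$ let $\rho_\mu\colon(R^{(\beta)})^*\to(R^{(\mu)})^*=R^{(\mu+1)}$ be restriction to $R^{(\mu)}$; it is norm nonincreasing. Since $\bigcup_\mu R^{(\mu)}$ is dense in $R^{(\beta)}$, the union $\bigcup_\mu B_{R^{(\mu)}}$ is norm-dense in $B_{R^{(\beta)}}$, so for every $g$ we have $\|g\|=\sup_\mu\|\rho_\mu g\|$ with $\|\rho_\mu g\|$ nondecreasing in $\mu$. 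Hence on the compact unit sphere of $F$ the continuous functions $x\mapsto\|\rho_\mu x\|$ increase pointwise to $x\mapsto\|x\|$, so by a compactness argument (Dini's theorem for the monotone net indexed by the even ordinals below $\beta$) the convergence is uniform; pick an even $\mu<\beta$ with $\|\rho_\mu x\|>(1-\ep)\|x\|$ for all $x\in F$. Then $\rho_\mu|_F$ is an isomorphism onto its image with $d\bigl(F,\rho_\mu(F)\bigr)\le(1-\ep)^{-1}$, and $\rho_\mu(F)$ is a finite-dimensional subspace of $R^{(\mu+1)}$. Since $\mu$ is even, $\mu+1$ is odd and $\mu+1<\gamma$, so the induction hypothesis provides a $(1+\ep)$-embedding of $\rho_\mu(F)$ into $R^*$; composing gives an embedding of $F$ into $R^*$ with distortion at most $(1+\ep)(1-\ep)^{-1}$. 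As $\ep>0$ was arbitrary, $R^{(\gamma)}$ is finitely representable in $R^*$.

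The main obstacle is this limit-ordinal case, and within it the step of ``localizing'' the finitely many functionals spanning $F$ to a single even level $\mu<\beta$; that is exactly where the density of $\bigcup_\mu B_{R^{(\mu)}}$ in $B_{R^{(\beta)}}$ combined with the compactness/Dini argument on the sphere of $F$ does the work. Everything else is bookkeeping with ordinal parity and routine multiplication of distortion constants.
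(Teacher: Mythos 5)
Your proposal is correct and follows essentially the same route as the paper: local reflexivity (plus transitivity of finite representability) handles the successor steps, and the only real work is at $\gamma=\alpha+1$ for $\alpha$ a limit ordinal, where one must localize a finite-dimensional $F\subset (R^{(\alpha)})^*$ to a single even level below $\alpha$. The paper implements that localization with a finite $\ep$-net in $S_F$ and almost-norming vectors chosen at finitely many levels (taking the maximum level), which is the same compactness argument you phrase via Dini's theorem for the monotone net $x\mapsto\|\rho_\mu x\|$.
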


\begin{proof} For finite ordinals this result is an immediate consequence
of the local reflexivity principle \cite{LR69}. The same principle
implies that if the statement is true for an infinite odd ordinal
$\gamma$, then it is true for all ordinals of the form
$\gamma+2n$. So if we use the transfinite induction, it remains to
show that the statement holds for ordinals of the form
$\gamma=\alpha+1$, where $\alpha$ is a limit ordinal, provided it
holds for all smaller odd ordinals.
\medskip

We have

\begin{equation}\label{E:Zbeta}
R^{(\alpha)}=\cl\left(\bigcup_{\stackrel{\beta<\alpha}{\beta{\rm~
is~ even}}} R^{(\beta)}\right).\end{equation}

Let $F$ be a finite dimensional subspace of $R^{(\alpha+1)}$,
$\ep>0$. Let $\{f_i\}_{i=1}^k$ be a finite $\frac{\ep}2$-net in
$S_F$ (the unit sphere of $F$). For each $f_i$ we can find an even
ordinal $\beta_i<\alpha$ and a vector $x_i\in Z^{(\beta_i)}$ such
that $||x_i||=1$ and $f_i(x_i)\ge 1-\frac{\ep}2$. Let
$\tau=\max_{1\le i\le k}\beta_i$. Then the natural restriction of
$F$ to the space $R^{(\tau)}$ is an $\ep$-isometry, hence $F$ is
$\ep$-isometric to a subspace in $R^{(\tau+1)}$, and the induction
hypothesis implies that $R^{(\alpha+1)}$ is finitely representable
in $R^*$.
\end{proof}

To show that $R^{(\omega^2)}$ is separable it suffices to show
that $R^{(n\omega)}$ is separable for each $n$. This can be shown
by a straightforward induction based on the following results:

\begin{theorem}[{\cite[Theorem 16]{Per79}}] If $X$ is quasireflexive, then
$X^{(\omega)}=X\oplus[x_i]$, where $\{x_i\}$ is an ESA basis.
\end{theorem}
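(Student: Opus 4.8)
The plan is to exploit the self-similarity of the tower of even duals. Write $n=\dim X^{**}/X<\infty$; if $n=0$ the space is reflexive, $X^{(\omega)}=X$, and the statement is vacuous, so assume $n\ge 1$. First I would record the structural facts. From the canonical decomposition of the third dual, $X^{***}=J_{X^*}(X^*)\oplus(J_XX)^{\perp}$, together with $(J_XX)^{\perp}\cong(X^{**}/X)^{*}$, one obtains $\dim X^{***}/X^{*}=n$; iterating this, every dual $X^{(m)}$ is quasireflexive of order exactly $n$. Hence, in the increasing chain $X=X^{(0)}\subset X^{(2)}\subset X^{(4)}\subset\cdots$ given by the (isometric) canonical embeddings, each inclusion has codimension $n$, and $X^{(\omega)}$ is the closure of $\bigcup_{k\ge 0}X^{(2k)}$.

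Next I would produce the direct-sum decomposition. For $k\ge 1$ the space $X^{(2k)}=(X^{(2k-1)})^{*}$ is a dual space, so the adjoint of the canonical embedding $X^{(2k-1)}\hookrightarrow X^{(2k+1)}$ is a norm-one projection $Q_k\colon X^{(2k+2)}\to X^{(2k)}$ which is the identity on $X^{(2k)}$. The compositions $\Pi_k=Q_1\circ\cdots\circ Q_{k-1}\colon X^{(2k)}\to X^{(2)}$ are norm-one projections that agree on overlaps, hence assemble into a norm-one projection $\Pi\colon X^{(\omega)}\to X^{(2)}$. Since $X$ is a closed, finite-codimensional, hence complemented subspace of $X^{(2)}$, fix a bounded projection $\pi\colon X^{(2)}\to X$ and set $P=\pi\circ\Pi\colon X^{(\omega)}\to X$, a bounded projection onto $X$; then $X^{(\omega)}=X\oplus\ker P$. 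At each level $k\ge 0$ choose vectors $x_{kn+1},\dots,x_{(k+1)n}\in(\ker P)\cap X^{(2k+2)}$ whose images in $X^{(2k+2)}/X^{(2k)}$ form a basis; using that each $Q_j$ carries $(\ker P)\cap X^{(2j+2)}$ into $(\ker P)\cap X^{(2j)}$ (because $\Pi\circ Q_j=\Pi$ on $X^{(2j+2)}$), one checks that $\{x_i\}$ is a Schauder basis of $\ker P$, so $\ker P=[x_i]$ and $X^{(\omega)}=X\oplus[x_i]$.

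The remaining --- and genuinely hard --- point is to make the choice of the $x_i$ so that $\{x_i\}$ is equal-signs additive. The mechanism is that, by the principle of local reflexivity, passing from $X^{(2k)}$ to $X^{(2k+2)}$ is ``the same operation at every scale'': a functional living in $X^{(2k-1)}$ acts on the directions freshly added inside $X^{(2k)}$ exactly the way a functional one level higher acts on the directions freshly added inside $X^{(2k+2)}$. Consequently the new basis vector at level $k$ plays the role that the new basis vector at level $k+1$ will play after one more duality step, and collapsing two consecutive coefficients of the same sign in a combination of the $x_i$ corresponds to a bona fide identification of elements of consecutive duals under which the norm is unchanged --- which is precisely the identity \eqref{E:ESA}. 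Making this rigorous is where essentially all the work lies: one must track the canonical liftings and the projections $Q_k$ on finite-dimensional sections, approximating by local reflexivity, and then derive \eqref{E:ESA} by induction on the number of levels involved. For order $n>1$ the same scheme runs with $n$ new vectors per level; the extra difficulty is the bookkeeping needed to arrange the $n$-per-level vectors into a single ESA basis rather than $n$ interleaved copies, and I expect this, together with the scale-invariance argument, to be the main obstacle.
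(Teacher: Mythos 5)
First, a point of comparison: the paper does not prove this statement at all --- it is imported verbatim as \cite[Theorem 16]{Per79} and used as a black box --- so there is no internal argument to measure yours against, and your attempt has to stand on its own. The first half of it, namely the tower of even duals, the norm-one (Dixmier) projections $Q_k$ obtained as adjoints of the canonical embeddings, their assembly into a norm-one projection of $X^{(\omega)}$ onto $X^{(2)}$ (hence a bounded one onto $X$), and the resulting splitting $X^{(\omega)}=X\oplus\ker P$, is sound and is the standard skeleton for results of this kind. Even there one repair is needed: choosing $x_{kn+1},\dots,x_{(k+1)n}$ merely as representatives of a basis of $X^{(2k+2)}/X^{(2k)}$ does not make the partial-sum operators restrictions of the $Q_j$'s, because $Q_k$ need not annihilate such representatives. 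You should instead take them to be a basis of the $n$-dimensional space $\ker Q_k$, which does lie in $\ker P$ since $\Pi\circ Q_k=\Pi$ on $X^{(2k+2)}$; then the partial sums are the norm-one maps $Q_k\circ\cdots\circ Q_{m-1}$ and the Schauder basis claim follows.

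The genuine gap is the ESA property, which is the entire content of the theorem beyond the comparatively easy splitting. ESA is an \emph{exact isometric identity}, namely \eqref{E:ESA}, and the mechanism you propose --- local reflexivity making each passage $X^{(2k)}\to X^{(2k+2)}$ ``the same at every scale'' --- cannot deliver it: the local reflexivity principle \cite{LR69} produces only $(1+\ep)$-isomorphisms of finite-dimensional pieces, so at best it yields an ``almost ESA'' system, and you indicate no limiting or exact-structure argument that would upgrade approximate equalities to equalities. Moreover, your sketch never engages the sign condition: ESA asserts invariance under merging two consecutive coefficients \emph{of equal sign}, and nothing in the scale-invariance heuristic explains why equal signs are the relevant hypothesis; that is precisely where the content of the notion lies. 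What is actually required is an exact evaluation of $\bigl\|\sum_i a_ix_i\bigr\|$ for a specific canonical choice of the $x_i$ (via the isometric embeddings, the norm-one projections, and iterated weak$^*$ limits along the tower), showing it is unchanged when $a_k$ and $a_{k+1}$ of equal sign are merged. You acknowledge that ``essentially all the work'' lies here; that is an honest assessment, but it means the proof of the stated theorem has not been given.
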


\begin{theorem}[{\cite[Theorem 3]{BS76}}] If a Banach space with an ESA basis has
nontrivial type, then it is quasireflexive. \end{theorem}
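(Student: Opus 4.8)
The plan is to translate the statement into a bounded‑completeness/shrinking analysis of the basis and then to read off quasireflexivity from the ESA relations; the nontrivial‑type hypothesis will enter exactly where one must rule out a ``$c_0/\ell_1$‑type'' obstruction (both $\ell_1$, with its unit vector basis, and the space $c$ of convergent sequences carry ESA bases without being quasireflexive, and in each case it is the failure of nontrivial type that is responsible).

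\emph{Step 1 (reduce to a dual space with a shrinking basis).} Since $E$ has nontrivial type, the fact recalled above — a consequence of \cite[Lemma~3, p.~290]{BS76} — shows that the ESA basis $\{e_n\}$ of $E$ is boundedly complete. Hence $E$ is canonically isometric to $F^{*}$, where $F:=\overline{\operatorname{span}}\{e_n^{*}\}\subseteq E^{*}$, and $\{e_n^{*}\}$ is a \emph{shrinking} basis of $F$. Because a Banach space is quasireflexive if and only if its dual is, and with the same order (apply duality to $0\to X\to X^{**}\to X^{**}/X\to 0$), it is enough to prove that $F$ is quasireflexive, i.e.\ that $F^{**}/F$ is finite dimensional.

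\emph{Step 2 (bound $F^{**}/F$ via the ESA identity).} By the standard description of the bidual of a space with a shrinking basis (see, e.g., \cite{LT77}), $F^{**}$ is the space of scalar sequences $(c_n)$ with $\sup_{N}\bigl\|\sum_{n\le N}c_n e_n^{*}\bigr\|_{F}<\infty$, and $F$ sits inside as the sequences whose series converges in norm. The ESA identity for $\{e_n\}$ says that replacing two consecutive coordinates of the same sign in a vector of $E=F^{*}$ by their sum leaves its norm unchanged; combined with invariance under spreading this is very rigid — it gives, for instance, $\|e_1+\cdots+e_N\|_E=N\|e_1\|_E$ for every $N$. Reading this off against $F$, the plan is to show that it forces a strong regularity of the difference sequence $\{e_n^{*}-e_{n+1}^{*}\}$, to the effect that for every $(c_n)\in F^{**}$ the partial sums $\sum_{n\le N}c_n e_n^{*}$ converge in norm in $F$ modulo a single distinguished element $\eta\in F^{**}$ determined by the ESA structure (morally, the ``summing direction''). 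Granting this, $F^{**}=F+\mathbb{R}\eta$, so $\dim F^{**}/F\le 1$, whence $F$ — and therefore $E$ — is quasireflexive, of order at most one.

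\emph{Main obstacle.} Step 2 is the crux, and it is exactly where nontrivial type is indispensable, since without it the statement is false: for $E=\ell_1$ the unit vector basis is a boundedly complete ESA basis, yet there $F=c_0$ and $F^{**}/F=\ell_\infty/c_0$ is infinite dimensional. So the argument must feed the type‑$p$ inequality for $E$ into the ESA relations; I expect the efficient route is to push the machinery of \cite{BS76} that already produces bounded completeness one step further — using type $p$ to preclude a uniform copy of the summing basis of $c_0$ inside the difference sequence $\{e_n-e_{n+1}\}$, which is precisely the phenomenon that would let a bounded‑partial‑sum sequence $(c_n)$ of $\{e_n^{*}\}$ fail to converge in $F$ beyond the single obstruction $\eta$.
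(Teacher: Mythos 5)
The paper does not prove this statement at all: it is quoted verbatim as \cite[Theorem 3]{BS76}, so there is no internal argument to compare yours with, and your attempt has to stand on its own. Judged that way, it contains a correct reduction but a genuine gap at the decisive step. Step 1 is sound: bounded completeness of the ESA basis (cited in the paper from \cite[Lemma 3]{BS76}) gives $E\simeq F^{*}$ with $F=[e_n^{*}]$ and $\{e_n^{*}\}$ shrinking, and the Civin--Yood duality for quasireflexivity correctly reduces the problem to bounding $\dim F^{**}/F$. Your counterexamples ($\ell_1$ with its unit vector basis, $c$ with its summing basis) also correctly locate where nontrivial type must enter.

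The problem is that Step 2, which is the entire content of the theorem, is never carried out. You state as a ``plan'' that the ESA identities force every sequence $(c_n)$ with bounded partial sums $\sum_{n\le N}c_ne_n^{*}$ to converge in $F$ modulo a single distinguished element $\eta$, and then you ``grant'' it. No mechanism is exhibited by which the type-$p$ inequality interacts with the ESA relations to produce this one-dimensionality; the closing remark that type should ``preclude a uniform copy of the summing basis of $c_0$ inside the difference sequence'' is a restatement of the desired dichotomy, not a derivation of it. In \cite{BS76} this is exactly where the real work lies (the summing functional, the analysis of $\{e_n-e_{n+1}\}$, and the use of $B$-convexity to exclude uniform copies of $\ell_1^n$), so as written your argument proves only the reduction, not the theorem. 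Two smaller points: ``canonically isometric to $F^{*}$'' should be ``isomorphic'' unless you first check that the ESA basis is monotone, and the sharper conclusion ``quasireflexive of order at most one'' is an extra claim that would itself require proof rather than follow from what you wrote.
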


The fact that $R^{(\omega^2+1)}$ is nonseparable was proved by
Bellenot \cite{Bel82}. Since the details of the argument of
Bellenot are difficult to follow, we would like to mention that
this result can be derived using the argument of Davis and
Lindenstrauss \cite[Theorem 4]{DL76}. Let us mention the
modification of the argument of \cite{DL76} needed to achieve this
goal. To understand the discussion below the reader is expected to
read the very elegant proof in \cite[pp.~194--196]{DL76} (we would
like to mention that there are two misprints on page 195, line 15
from above: $f_{(\sigma,n)}$ should be $f_{(0,n)}$ and $f_{(0,n)}$
should be $f_{(1,n)}$).

We build the collections $x_{(\sigma,n)}$ and $f_{(\sigma,n)}$ in
the way described in \cite[pp.~194--195]{DL76}. Then, for each
$\sigma$ in the Cantor set $\Delta$ we pick a sequence
$\{\sigma_j\}_{j=1}^\infty$ of end points in $\Delta$ so that
$\sigma_j\to\sigma$ and let $F_\sigma\in R^{(\omega^2+1)}$ be any
weak$^*$ limit point of the sequence
$\{f_{(\sigma_j,1)}\}_{j=1}^\infty$ in $R^{(\omega^2+1)}$. We
claim that $||F_\sigma-F_\tau||\ge\frac12$ for each
$\sigma,\tau\in\Delta$, $\sigma<\tau$. The reason is: we can find
a $\lambda$ which is an end point of $\Delta$ and satisfies
$\sigma<\lambda<\tau$. But then, as is easy to check, for any
$n\in\mathbb{N}$ we have $F_\sigma (x_{(\lambda,n)})=1$ and
$F_\tau (x_{(\lambda,n)})=0$. Since $||x_{(\lambda,n)}||\le 2$,
the conclusion follows. \hfill $\Box$

\section{Proof of Proposition \ref{P:FinToInf}}\label{S:HilbEx}

The fact that finite subsets of $\ell_2$ admit embeddings into an
arbitrary non-superrefle\-xi\-ve Banach space with uniformly
bounded distortions is an immediate consequence of the Dvoretzky
theorem \cite{Dvo61}.

As an example of a suitable space $X$ we use the James tree space
(see \cite{Jam74,LS75}), but build on $\ell_p$ with
$p\in(2,\infty)$. More precisely we follow the construction of
\cite[Section 2]{LS75}. So we consider an infinite binary tree
$T_\infty$ whose vertices can be labelled with finite sequences of
$0$s and $1$s (including the empty sequence) with the norm
\[||x||=\sup\left(\sum_{j=1}^k\left(\sum_{v\in
\mathcal{J}_j}x(v)\right)^p\right)^{\frac1p}<\infty,\] where the
supremum is taken over all choices of $k$ and of pairwise disjoint
finite descending paths $\mathcal{J}_1,\dots,\mathcal{J}_k$ in the
tree $T_\infty$. Denote by $B$ the closed linear span in
$(JT_p)^*$ of the biorthogonal functionals $\{e_v^*\}$ of the unit
vector basis $\{e_v\}$ of $JT_p$.
\medskip

In the same way as in \cite[Section 2]{LS75} one can establish the
following results:

\begin{enumerate}

\item $JT_p$ is naturally isomorphic to $B^*$.

\item The quotient of $(JT_p)^*$ with the kernel $B$ is isometric
to $\ell_q(\Gamma)$ where $\Gamma$ is a set of cardinality
continuum and $\frac1q+\frac1p=1$.

\item The space $(JT_p)^*$ is a nonseparable dual of a separable
Banach space.

\end{enumerate}

It remains to prove that $(JT_p)^*$ does not admit a bilipschitz
embedding of $\ell_2$. In fact, otherwise, by \cite[Corollary
7.10]{BL00}, it would contain a linear isomorphic image of
$\ell_2$. Since $\ell_q(\Gamma)$ with $q\in(1,2)$ is totally
incomparable with $\ell_2$, this would imply that $B$ contains a
subspace isomorphic to $\ell_2$. This can be shown to be false in
the following way.
\medskip

Assume that $B$ contains a sequence $\{b_i\}_{i=1}^\infty$
equivalent to the unit vector basis of $\ell_2$. Clearly we may
assume that $\{b_i\}_{i=1}^\infty$ is disjointly supported with
respect to the basis $\{e_v^*\}$. Let
$\{b_i^*\}_{i=1}^\infty\subset JT_p$ be a bounded sequence
satisfying $b_i^*(b_i)=1$. The sequence $\{b_i^*\}$ also can be
assumed to be disjointly supported. By \cite[Corollary 3]{LS75}
(see also \cite[Proposition on p.~91]{LS75}), we may assume that
$\{b_i^*\}$ is weakly Cauchy. Then the sequence
$\{b^*_{2k}-b^*_{2k-1}\}_{k=1}^\infty$ is weakly null. Using a
straightforward generalization of \cite[Theorem, p.~420]{AI81} we
get that $\{b^*_{2k}-b^*_{2k-1}\}_{k=1}^\infty$ contains a
subsequence equivalent to the unit vector basis of $\ell_p$. We
assume that $\{b^*_{2k}-b^*_{2k-1}\}_{k=1}^\infty$ is equivalent
to the unit vector basis of $\ell_p$. Then, as is easy to see, we
get that for some constant $c>0$ and any finitely non-zero
sequence $\{\alpha_k\}$ we have
\[\left\|\sum_k\alpha_kb_{2k}\right\|\ge
c\left(\sum_k\alpha^q_{k}\right)^{\frac1q}.\] Since $q\in(1,2)$,
this contradicts to the assumption that $\{b_n\}$ is equivalent to
the unit vector basis of $\ell_2$. \hfill $\Box$

\end{large}

\begin{small}

\renewcommand{\refname}{\section{References}}

\end{small}

\end{document}